\def\e{\varepsilon}    
 \def\th{\theta}    
 \def\m{\mu}  \def\p{\pi}
 \def\om{\omega}
\def\cD{{\cal D}}
\def\cP{{\cal P}}
\def\cQ{{\cal Q}}
\def\cC{{\cal C}}
\theoremstyle{plain}
\newtheorem{theorem}{Theorem}
\newtheorem{lemma}[theorem]{Lemma}
\newtheorem{conjecture}[theorem]{Conjecture}
\theoremstyle{definition}
\newtheorem{claim}{Claim}
\newtheorem{ass}{Assumption}
\theoremstyle{remark}
\def\cX{{\mathcal X}}
\newcommand{\brac}[1]{\left(#1\right)}
\newcommand{\bfrac}[2]{\left(\frac{#1}{#2}\right)}
\newcommand{\set}[1]{\left\{#1\right\}}
\def\E{\mathbb{E}}
\def\Pr{\mathbb{P}}
\newcommand{\ignore}[1]{}
\newcommand{\beq}[2]{\begin{equation}\label{#1}#2\end{equation}}
\def\cG{\mathcal{G}}
\def\cQ{\mathcal{Q}}
\newcommand{\mults}[1]{\begin{multline*}#1\end{multline*}}
\def\cU{{\mathcal U}}
\newcommand{\real}{\ensuremath {\mathbb R} }	
\newcommand{\ent}{\ensuremath {\mathbb Z} }
\newcommand{\tor}{\ensuremath {\mathbb T} }
\newcommand{\remove}[1] {}
\newcommand{\ex} {\E}
\DeclareMathOperator{\Bin}{Bin}
\newcommand{\END}{\text{END}}
\newcommand{\xt}[1]{#1}
\begin{document}
\author{Alan Frieze\thanks{Department of Mathematical Sciences, Carnegie Mellon University, Pittsburgh PA, USA, 15213. Research supported in part by NSF grant DMS1661063.} and Xavier P\'erez-Gim\'enez\thanks{Department of Mathematics, University of Nebraska-Lincoln, Lincoln NE, USA, 68588. Research supported in part by Simons Foundation Grant \#587019 and by NSF grant DMS2201590.}}

\title{Rainbow Hamilton Cycles in Random Geometric Graphs}
\maketitle

\begin{abstract}
Let $X_1,X_2,\ldots,X_n$ be chosen independently and uniformly at random from the unit $d$-dimensional cube $[0,1]^d$. Let $r$ be given and let $\cX=\set{X_1,X_2,\ldots,X_n}$. The random geometric graph $G=G_{\cX,r}$ has vertex set $\cX$ and an edge $X_iX_j$ whenever $\|X_i-X_j\|\leq r$. We show that if each edge of $G$ is colored independently from one of $n+o(n)$ colors and $r$ has the smallest value such that $G$ has minimum degree at least two, then $G$ contains a rainbow Hamilton cycle a.a.s.
\end{abstract}

\section{Introduction}
Given a graph $G=(V,E)$ plus an edge coloring $c:E\to[q]$, we say that $S\subseteq E$ is {\em rainbow colored} if no two edges of $S$ have the same color. There has been a substantial amount of research on the question as to when \xt{an edge colored graph contains} a rainbow Hamilton cycle. The early research was done in the context of the complete graph $K_n$ when restrictions were placed on the colorings. In this paper we deal with the case where we have a random geometric graph and the edges are colored randomly.

In the case of the Erd\H{o}s-R\'enyi random graph $G_{n,m}$, Cooper and Frieze~\cite{CF2} proved that if $m\geq 21n\log n$ and each edge of $G_{n,m}$ is randomly given one of at least $q\geq 21n$ random colors then {\em asymptotically almost surely} (a.a.s.) there is a rainbow Hamilton cycle. Frieze and Loh~\cite{FLo} improved this result to show that if $m\geq \frac{1}{2}(n+o(n))\log n$ and $q\geq (1+o(1))n$ then a.a.s.\ there is a rainbow Hamilton cycle. This was further improved by Ferber and Krivelevich~\cite{FeKr} to $m=n(\log n+\log\log n+\om)/2$ and $q\geq (1+o(1))n$, where $\om\to\infty$ with $n$. This is best possible in terms of the number of edges. The case $q=n$ was considered by Bal and Frieze~\cite{BalF}. They showed that $O(n\log n)$ random edges suffice.

Let $X_1,X_2,\ldots,X_n$ be chosen independently and uniformly at random from the unit $d$-dimensional cube $[0,1]^d$ where $d\geq 2$ is constant. Let $r$ be given and let $\cX=\set{X_1,X_2,\ldots,X_n}$.
The random geometric graph $G_{\cX,r}$ has vertex set $[n]$ and an edge $ij$ for each pair $i,j\in[n]$ ($i\ne j$) satisfying $\|X_i-X_j\|\leq r$. Here $\|\cdot\|$ refers to an arbitrary $\ell_p$-norm, where $1<p\leq\infty$.
We define the {\em length} of an edge $ij$ to be $\|X_i-X_j\|$.
Throughout the paper we tacitly assume that the points $X_1,\ldots,X_n$ are all different, which happens almost surely, and identify the vertex set with $\cX$. (We will use the terms point and vertex interchangeably when referring to an element of $\cX$.)
Suppose now that each edge of $G_{\cX,r}$ is given a random color from $[q]$. We call the resulting edge-colored graph $G_{\cX,r,q}$. Bal, Bennett, P\'erez-Gim\'enez and Pralat~\cite{BBPP} considered the problem of the existence of a rainbow Hamilton cycle \xt{in} $G_{\cX,r,q}$. They showed that for $r$ at the threshold for Hamiltonicity, $q=O(n)$ random colors are sufficient to have a rainbow Hamilton cycle a.a.s. The aim of this paper is to show that $q=n+o(n)$ colors suffice in this context.

Let $\th=\th(d,p)$ denote the volume of the unit $\ell_p$-ball in $d$ dimensions, and let
\beq{defr}{
r^d=\frac{(2/d)\log n+(4-d-2/d)\log\log n+f}{2^{2-d}\th n},
}
for some $f=f(n)$.
\begin{theorem}\label{th1}
Let $r$ be as in \eqref{defr} for some
$f\to\infty$. Let $\eta>0$ be an arbitrarily small constant and $q=\lceil(1+\eta)n\rceil$. Then a.a.s.\ $G_{\cX,r,q}$ contains a rainbow Hamilton cycle.
\end{theorem}
We actually prove a stronger hitting-time result, for which we need some definitions. For $n\ge3$, let 
\[
\hat{r} = \inf\set{r\geq 0:\;G_{\cX,r}\text{ has minimum degree at least 2}}.
\]
Clearly, $\hat r$ is a deterministic continuous function of the random set of points $\cX$ and thus a random variable.
The random graph $G_{\cX,\hat{r}}$ can be obtained by taking an empty graph on vertex set $\cX$ and adding edges one by one in increasing order of lengths until the minimum degree becomes $2$ or more. (If two or more edges have the same length, they should be added all at once to the graph, but this does not happen almost surely.) In particular, $G_{\cX,\hat{r}}$ has minimum degree at least $2$, so the infimum in the definition of $\hat r$ can be safely replaced by a minimum.
The asymptotic distribution of $\hat r$ is well known, and can be derived from Theorem~8.4 in~\cite{Penrose}. Indeed, with $r$ parametrized in terms of $f$ as in~\eqref{defr}, we have
\beq{Pen}{
\lim_{n\to\infty}\Pr(\hat r \le r) = \begin{cases}
0 & f\to-\infty
\\
F(\alpha) & f\to \alpha\in\real
\\
1 & f\to\infty,
\end{cases}
}
where $F(\alpha)$ is a continuous distribution function. (An explicit description of $F(\alpha)$ can be found, e.g.,  in~Corollaries~3 and~4 of~\cite{BBPP}.)
\xt{Also well known are the facts that a.a.s.\ $G_{\cX,\hat{r}}$ is $2$-connected~\cite{Penrose} and contains a Hamilton cycle~\cite{BBKMW,MPW}. Motivated by all the above, w}e now consider the edge-colored version $G_{\cX,\hat{r},q}$ of $G_{\cX,\hat{r}}$.
Our main result asserts that, if we start with the empty graph on vertex set $\cX$ and we add randomly colored edges one by one in increasing order of lengths, then (provided that we use sufficiently many colors) a.a.s.\ we obtain a rainbow Hamilton cycle as soon as the minimum degree becomes at least $2$.
\begin{theorem}\label{th2}
Let $\eta>0$ be any fixed constant and $q=\lceil(1+\eta)n\rceil$. Then $G_{\cX,\hat r,q}$ has a rainbow Hamilton cycle a.a.s.
\end{theorem}
Combining this and~\eqref{Pen} immediately yields Theorem~\ref{th1}, so we will devote the remainder of the paper to the proof of Theorem~\ref{th2}.


\paragraph{Proof sketch.} We partition $[0,1]^d$ into small cubic cells of side around $\e r$\xt{, where $\e>0$ is constant that is chosen to be sufficiently small given $\eta$ and $d$, and $r$ is a deterministic function of $n$ which is a.a.s.\ slightly below $\hat r$}. These cells are classified into types acording to the number of points and color repetitions they contain.
Then the set of cells is endowed with a graph structure by connecting every pair of cells at distance slightly less than $r$. (Note that similar constructions have been fruitfully used in~\cite{BBPP,BBKMW,DMP,MPW}.)
In Section~\ref{sec:struc}, we derive some basic properties of this graph of cells.
Then we use a variation of P\'osa's rotation-extension argument to show that most cells contain a spanning family of `not too many' rainbow paths that avoid certain forbidden colors. This type of argument has been widely applied in the study of Hamilton cycles in many other families of random graphs (e.g. the Erd\H{o}s-R\'enyi random graph $G_{n,m}$ \cite{KS},  random regular graphs \cite{FF}, preferential attachment graphs~\cite{FPPR}), but so far not before in the context of random geometric graphs.
In Section~\ref{sec:coloring}, we introduce and analyze a greedy procedure ({\tt Build}), which a.a.s.~constructs a rainbow Hamilton cycle in $G_{\cX,\hat r,q}$, based on the structure and properties of the graph of cells.
An unusual and interesting feature of this procedure is that it sometimes introduces errors (i.e.~color repetitions) which are recursively fixed by another procedure ({\tt Problem-fix}), which may in turn trigger further errors. We show that typically these errors do not accumulate past a certain bound and the algorithm succeeds.

\paragraph{Note about parameters $d$ and $p$.}
Recall that both Theorems~\ref{th1} and~\ref{th2} assume $d\ge 2$ and $1<p\le \infty$. The former assumption is not superfluous as the $1$-dimensional case is significantly different. Indeed, when $d=1$, vertices of degree less than $2$ are no longer the main obstruction to the existence of Hamilton cycles (rainbow or not). In fact, even for $r$ well above the sharp threshold $\hat r$ for the minimum degree being at least $2$, one will typically find many empty ``gaps'' in $[0,1]$ of length greater than $r$ between pairs of consecutive vertices of $\cX$. These gaps prevent $G_{\cX,\hat{r}}$ from being connected and thus from having a Hamilton cycle.
On the other hand, our results may still be true for $p=1$. The only reason why we exclude the $\ell_1$-norm case is because that is required in some of the technical lemmas from earlier papers. More precisely, Lemma~\ref{lem1} (which is proved in~\cite{BBPP,MPW}) relies on a result by Penrose (Thm 13.17 in~\cite{Penrose}), which asserts that a.a.s.\ $G_{\cX,\hat{r}}$ is $2$-connected. Penrose's result assumes $1<p\le\infty$ for technical reasons in the argument, but it is plausible that it still holds for $p=1$, in which case our results could be extended as well.

\paragraph{\xt{Further remarks and open problems.}}
We finish the discussion by observing that our results are best possible in terms of the number of permitted colors $q$.
Indeed, for dimension $d\in\{2,3\}$, if we allow only $q=n$ colors, then a standard coupon collector argument shows that  a.a.s.\ some colors are still missing on $G_{\cX,\hat r,q}$.
More precisely, let
\[
r^* = \inf\set{r\geq 0:\; \text{all $n$ colors appear on $G_{\cX,r,n}$}}.
\]
Then a.a.s.\ $r^*\sim \sqrt[d]{2\log n / (\theta n)}$, and thus
\[
\begin{cases}
r^* \ge (3/2+o(1))\hat r & \text{for $d\in\{2,3\}$}
\\
r^* \sim \hat r & \text{for $d=4$}
\\
r^*\le (5/8+o(1))\hat r & \text{for $d\ge 5$}.
\end{cases}
\]
Similarly, if we consider a slight variation of the model in which the points of $\cX$ are placed on the torus $\tor^d := \real^d/\ent^d$ instead of the cube $[0,1]^d$, then with the analogous definitions of $G_{\cX,\hat r,q}$, $\hat r$ and $r^*$, we have that a.a.s.~$\hat r\sim \sqrt[d]{\log n / (\theta n)}$ (see~Theorem~8.3 in~\cite{Penrose}) and therefore $r^*\sim \sqrt[d]{2} \cdot \hat r$.
The difference in $\hat r$ between the two models is explained by the presence of vertices of degree less than $2$ near the boundaries of $[0,1]^d$.
In either case, it is conceivable that with exactly $q=n$ colors, as soon as the minimum degree is at least $2$ and we see all the colors, we have a rainbow Hamilton cycle a.a.s.
We state this as a conjecture for either the cube $[0,1]^d$ or the torus $\tor^d$ models. Let $t_1\vee t_2:=\max\{t_1,t_2\}$.
\begin{conjecture}
$G_{\cX, \hat r \vee r^*,n}$ has a rainbow Hamilton cycle a.a.s.
\end{conjecture}
We also include a similar statement conditional on the event that $G_{\cX, \hat r,n}$ has all $n$ colors (which is a rare event for the cube model and $d\in\{2,3\}$ or for the torus model and any $d\ge2$).
\begin{conjecture}
Conditional upon $r^*\le \hat r$, $G_{\cX, \hat r,n}$ has a rainbow Hamilton cycle a.a.s.
\end{conjecture}
While this paper only discusses rainbow Hamilton cycles, analogous questions can be asked about rainbow perfect matchings with $q=n/2$ colors.
Let
\[
\hat r_1 = \inf\set{r\geq 0:\;G_{\cX,r}\text{ has minimum degree at least 1}}.
\]
and (for even $n$)
\[
r^*_1 = \inf\set{r\geq 0:\; \text{all $n/2$ colors appear on $G_{\cX,r,n/2}$}}.
\]
\begin{conjecture}
For even $n$, $G_{\cX, \hat r_1 \vee r^*_1,n/2}$ has a rainbow perfect matching a.a.s.
\end{conjecture}
\begin{conjecture}
For even $n$ and conditional upon $r^*_1\le \hat r_1$, $G_{\cX, \hat r_1,n/2}$ has a rainbow perfect matching a.a.s.
\end{conjecture}


\section{Notation and structural properties}\label{sec:struc}
%
%
Throughout the paper, $d\ge2$ and an $\ell_p$-norm $\|\cdot\|$ on $\real^d$ ($1<p\le\infty$) are fixed.
Let $\eta>0$ be an arbitrary constant (which we will assume to be sufficiently small to satisfy all the requirements in the argument) and set 
\[
q = \lceil(1+\eta) n\rceil.
\]
Let $Q=[q]$ denote the set of available colors.
Recall that $G_{\cX,\hat r,q}$ is obtained by assigning to each edge of $G_{\cX,\hat r}$ a random color in $Q$ chosen uniformly at random and independently from all other choices.


Let $\e>0$ be a constant which is assumed to be sufficiently small given our choices of $\eta$ and $d$.
We use the standard $o()$, $\omega()$, $O()$, $\Theta()$ and $\Omega()$ asymptotic notation as $n\to\infty$ with the following extra considerations. We do not assume any sign on a sequence $a_n$ satisfying $a_n=o(1)$ or  $a_n=O(1)$, but on the other hand a sequence satisfying $a_n=\Theta(1)$, $a_n=\Omega(1)$ or $a_n=\omega(1)$ is assumed to be positive for all but finitely many $n$. Furthermore, the constants involved in the bounds of the definitions of $O()$, $\Theta()$ and $\Omega()$ may depend on $d$ as well as some other parameters, but not on $\eta$ or $\e$. Whenever these constants depend on our choice of $\e$ (in addition to $d$ or other parameters), we use the alternative notation $O_\e()$, $\Theta_\e()$ and $\Omega_\e()$ instead.


Henceforth, let $r$ be defined as in~\eqref{defr} for some arbitrary function $f\to-\infty$, $f=o(\log\log n)$.
\xt{(The reason why we take $f=o(\log\log n)$ is that we plan to mimic some of the definitions in~\cite{BBPP} where this assumption is made, but taking $f=o(\log n)$ would also work.)}
From~\eqref{Pen}, we have
\[
r\le \hat r
\qquad\text{and}\qquad
r\sim\hat r
\qquad\text{a.a.s.}
\]
Most of the (colored) edges that we will consider in our argument have length at most $r$, but we will need a few longer edges of length up to $\hat r$ to be able to close the rainbow Hamilton cycle.
The main advantage of working with parameter $r$ instead of $\hat r$ is that the former is deterministic, whereas the latter is random.
Following the construction in~\cite{BBPP}, we divide $[0,1]^d$ into a set $\cC$ of $N=\lceil(\e r)^{-1}\rceil^d$ $d$-dimensional cubic cells of side $s = 1/\lceil(\e r)^{-1}\rceil \sim\e r$. We remark that
\[
N \sim \frac{d \th n}{2^{d-1}\e^d\log n}.
\]
For sake of simplicity, assume that every point in $\cX$ is contained in one single cell in $\cC$ (which occurs almost surely, since cell boundaries have measure $0$).
The  {\em graph of cells} $\cG_\cC$ is a graph with vertex set $\cC$ where two cells are adjacent in $\cG_\cC$ if their centres are at $\ell_p$-distance at most $r - \xt{ds}$. (Here we assume that $\xt{ds}$ is much smaller than $r$ by our choice of $\e$.)
By the triangle inequality, any two different points  $X_i,X_j\in\cX$ which are contained in the same cell or in two cells that are adjacent in $\cG_\cC$ satisfy
$\|X_i-X_j\| \le r$,
and therefore $X_iX_j$ must be an edge of $G_{\cX,r}$ and a.a.s.\ an edge of $G_{\cX,\hat r}$.
\xt{In other words, the vertices contained in one cell or in two adjacent cells in $\cG_\cC$ induce a clique in $G_{\cX,r}$.
Moreover, note that the set of cells adjacent in $\cG_\cC$ to a given cell is contained in an $\ell_\infty$-ball of radius $r$ which has volume $(2r)^d$. Then, since each cell has volume $(1+o(1))(\e r)^d$, we conclude that
\beq{eq:degree}{
\text{the graph of cells $\cG_\cC$ has maximum degree $O_\e(1)$.}
}

}
A cell $C$ is {\em dense} if $|C\cap\cX|\geq \e^3\log n$. Otherwise it is {\em sparse}. The set of dense cells is denoted by $\cD$, and $\cG_\cD$ is the subgraph of $\cG_\cC$ induced by the \xt{dense} cells. The paper~\cite{BBPP} shows that a.a.s. 
\xt{
\beq{P1}{
\text{the largest component $\Gamma_0$ of $\cG_\cD$ contains $N-o(N)$ cells.}
}
As it is customary in the field, we call $\Gamma_0$ the {\em giant} component of $\cG_\cD$.
}
(The proof in~\cite{BBPP} is adapted from an earlier article~\cite{MPW} that uses a less restrictive definition of dense cell.)
The cells in $\Gamma_0$ are called {\em good}. A cell that is not good, but is adjacent \xt{(in $\cG_\cC$)} to a cell in $\Gamma_0$ is called {\em bad}. The remaining cells are called {\em ugly}. Note that bad cells are sparse by definition, but ugly cells may be dense or sparse. The following two lemmas describe properties that occur a.a.s., and their proofs are in~\cite{BBPP}: 
\begin{lemma}\label{lem0}A.a.s.
\begin{enumerate}[{\bf P1}]
\item $|C\cap\cX|\leq \log n$ for all $C\in\cC$ (cf.~Lemma~5 in~\cite{BBPP}).
\item There are at most $n^{1-\e/2}$ bad cells (cf.~Lemma~10 in~\cite{BBPP}).
\item There are at most $n^{O(\e^{1/d})}$ ugly cells (cf.~Lemma~10 in~\cite{BBPP}).
\item The maximum degree in \xt{$G_{\cX,\hat r}$} is at most $O(\log n)$ (cf.~Lemma~6 in~\cite{BBPP}).
\end{enumerate}
\end{lemma}
\begin{lemma}[Lemma~13 in~\cite{BBPP}]\label{lem1}
Let $\cX_\cU$ denote the set of points in ugly cells\xt{, and let $A>0$ be an arbitrary constant}. Then a.a.s.\ \xt{$G_{\cX,\hat r}$ has} a collection of paths $\cP$ such that
\begin{enumerate}[{\bf Q1}]
\item $\cP$ covers $\cX_{\cU}$.
\item $\cP$ covers at most two vertices inside \xt{each} non-ugly cell.
\item Every vertex in $\cX$ that is covered by $\cP$ is at graph-distance at most $2(20d)^d$ from some vertex in $X_\cU$ with respect to the graph
$G_{\cX,\hat r}$.
\item For each path $P\in\cP$, there is a good cell $C_P$ such that the two endvertices of $P$ lie in cells that are adjacent in $\cG_\cC$ to $C_P$;
\item Every pair of distinct paths in $\cP$ are at $\ell_p$-distance at least $Ar$ from each other.
\end{enumerate}
\end{lemma}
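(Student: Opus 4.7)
The strategy is to handle the ugly vertices locally: partition the ugly cells into small, well-separated clumps, then build one path per clump that tours the clump's ugly vertices through a bounded neighborhood of good cells, with both endpoints near a distinguished good cell. For arbitrary constant $A$, form clumps of ugly cells by placing two ugly cells into the same clump whenever their $\ell_p$-distance is at most $Kr$, for a large constant $K=K(A,d,\e)$, and taking the transitive closure; distinct clumps are then at pairwise $\ell_p$-distance greater than $Ar$, which already yields Q5. The key probabilistic claim to verify is that w.h.p.\ every clump contains at most a constant number $C=C(A,d,\e)$ of ugly cells: within any $\ell_p$-ball of radius $Kr$, the probability of producing more than $C$ ugly cells should decay super-polynomially, because each single cell is ugly with probability only $n^{-\Omega(1)}$ (a refinement of the estimate behind P3) and a union bound over the $O(r^{-d})$ such balls settles the claim.

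For each clump $\cU_i$, let $W_i$ be the set of cells at $\cG_\cC$-graph distance at most $(20d)^d$ from some cell of $\cU_i$. Property P1 together with the boundedness of $|\cU_i|$ guarantees that the good cells of $W_i$ induce a connected subgraph of $\cG_\cC$ dense enough to serve as scaffolding, so in particular we can pick a good cell $C_{P_i}\in W_i$ to serve as the anchor required by Q4. We then build a path $P_i$ inside $W_i$ that visits every point of $\cX$ lying in an ugly cell of $\cU_i$, enters any non-ugly cell at most twice (giving Q2), and has its two endpoints in cells $\cG_\cC$-adjacent to $C_{P_i}$ (giving Q4). The construction is standard: take a spanning tree of a suitable subgraph of $\cG_\cC$ passing through $\cU_i$, double each edge to form a closed walk, shortcut via an Euler tour to obtain a path, and splice in short local detours so each ugly vertex is visited exactly once. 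Every consecutive pair of path-vertices lies in $\cG_\cC$-adjacent cells and hence at $\ell_p$-distance at most $r$, realising an actual edge of $\Gr$. Q1 is then immediate, Q3 holds because every cell visited by $P_i$ lies in $W_i$ (hence within $\cG_\cC$-distance $(20d)^d$ of $\cU_i$, doubling to $2(20d)^d$ when measured in graph distance between vertices), and Q6 follows because $|W_i|$ is bounded, each cell has at most $\log n$ vertices by P1, and there are at most $n^{O(\e^{1/d})}$ clumps by P3, giving at most $n^{O(\e^{1/d})}$ edges in total.

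The main obstacle is the clumping estimate. The bound in P3 on the total number of ugly cells is not by itself enough to rule out ugly cells clustering inside a single small region; we need both super-polynomial rarity of a single-cell ugly event and near-independence of such events across well-separated cells. Both should be available by rerunning the arguments for P2 and P3 from \cite{BBGP} in a localised form, then combining them through a union bound over $\ell_p$-balls of radius $Kr$. Once this is established, the remainder of the proof is a deterministic combinatorial construction that uses only the degree bound P4, the connectivity of the giant component from P1, and the cell-occupancy bound from P1.
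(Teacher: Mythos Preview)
The paper does not prove this lemma: immediately before stating Lemmas~\ref{lem0} and~\ref{lem1} it says that ``their proofs are in~\cite{BBGP}'', so there is no in-paper argument to compare your proposal against. Your outline is in fact broadly the strategy of~\cite{BBGP}: group the ugly cells into well-separated clusters, show each cluster has bounded size, and thread one path per cluster through a bounded good neighbourhood, anchoring both endpoints near a single good cell.

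Two remarks on the sketch itself. First, the bounded-clump claim is indeed the crux, and it does \emph{not} follow from a per-cell rarity estimate plus near-independence: a cell can be ugly not because it is itself sparse but because it is walled off from $K_g$ by surrounding sparse cells, so ugliness of nearby cells is positively correlated rather than near-independent. What is actually needed (and what~\cite{BBGP} proves) is a bound on the probability that a connected block of cells of a given size is entirely sparse, which is what you are gesturing at when you say you would rerun the arguments for P2 and P3 in a localised form. Second, your Euler-tour construction does not directly give Q2: doubling a spanning tree and touring it visits a cell once per incident tree edge, which can exceed two if the tree has a high-degree vertex. One must either control the maximum degree of the tree or, as in~\cite{BBGP}, build the path more carefully so that each non-ugly cell contributes at most two of its vertices.
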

For a region $S \subseteq [0,1]^d$, we let $V(S)=S\cap\cX$ and $E(S)=\binom{V(S)}{2} \cap E(G_{\cX,\hat r})$\xt{, i.e.~$V(S)$ and $E(S)$ are the set of vertices and edges of $G_{\cX,\hat r}$ that are contained inside of the region $S$.}
(Note that this definition will be slightly modified later in~\eqref{eq:VC}\xt{, where a few vertices and edges belonging to some special paths will be removed}.)
\xt{We will typically use the notation $V(S)$ and $E(S)$ when $S$ is a cell or a union of cells. Recall that any two points in a cell $C$ are at $\ell_p$-distance much smaller than $r$ or $\hat r$, and thus $V(C)$ induces a clique in $G_{\cX,\hat r}$.}
A color repetition in $S$ is a pair of edges in $E(S)$ that receive the same color in $G_{\cX,\hat r,q}$.
A cell $C$ is {\em rainbow} if $E(C)$ is rainbow: that is, $C$ has no color repetitions.

We now prove some lemmas related to the colorings of cells.
\xt{Hereafter, the $\ell_p$-distance between two cells or between a cell and the boundary of $[0,1]^d$ is measured from the center of the cell(s).}
\begin{lemma}\label{lemcols}
For any constant $A>0$, the following hold  a.a.s.
\begin{enumerate}[(a)]
\item
There are at most $\log^4n$ non-rainbow cells.
\item
No $d$-dimensional cube of side at most $Ar$ obtained as a union of cells in $\cC$ contains $2$ color repetitions.
\item
There are no two non-rainbow cells within $\ell_p$-distance $Ar$ of each other.
\item
There are no non-rainbow cells within $\ell_p$-distance $Ar$ of the boundary of $[0,1]^d$.
\item
There are no non-rainbow cells within $\ell_p$-distance $Ar$ of any cell that is not good.
\end{enumerate}
\end{lemma}
\paragraph{Remark.}
In particular (with $A>1$), a.a.s.\ every non-rainbow cell is good and is only adjacent in $\cG_\cC$ to good rainbow cells.
Moreover, a.a.s.\ every cell contains at most one \xt{color} repetition.

\begin{proof}
All the statements in the lemma follow from simple first moment arguments. \xt{We will also make repeated use of the following simple fact, which follows immediately from Markov's inequality. Given a random variable $Y$ with $\Bin(m,t)$ distribution and any integer $0\le k\le m$,
\begin{equation}\label{eq:silly}
\Pr(Y\ge k) = \Pr\left(\binom{Y}{k}\ge 1\right) \le \ex \binom{Y}{k} = \binom{m}{k} t^k \le (mt)^k.
\end{equation}
(In other words, the probability of having at least $k$ successful trials is at most the expected number of sets of $k$ successful trials.)
}

(a) For a fixed cell $C$, 
\beq{prain}{
\Pr(C\text{ is not rainbow}\mid \text{\bf P1})\leq \lceil(1+\eta)n\rceil \Pr\brac{\Bin\brac{ \lfloor\log^2n\rfloor, \frac{1}{\lceil(1+\eta)n\rceil}}\geq 2}=O\bfrac{\log^4n}{n}.
}
{\em Explanation:} we choose a color $c$. Then the number of edges of color $c$ in cell $C$ is dominated by the stated binomial. \xt{We also use~\eqref{eq:silly} with $k=2$.}

We then have, by the Markov inequality that
\[
\Pr(\neg\ (a)\mid \text{\bf P1})\leq \frac{\ex( \text{number of \xt{non-}rainbow cells} \mid \text{\bf P1})}{\log^4 n} = O\bfrac{N\log^4n}{n\log^4n}=O_\e\bfrac{1}{\log n}.
\]

(b)--(c) Let $\cQ$ be the set of regions $Q\subseteq[0,1]^d$ such that $Q$ is a $d$-dimensional cube of side at most $(A+1)r$ obtained as a union of cells in $\cC$. Note that $|\cQ|=O_\e(N)=O_\e(n/\log n)$. Moreover, assuming~{\bf P1}, $|V(Q)|=O_\e(\log n)$ and thus $|E(Q)|=O_\e(\log^2n)$ for each $Q\in\cQ$. Therefore, we have that
\begin{align}
\Pr(\text{some $Q\in\cQ$ has $2$ color repetitions}\mid \text{\bf P1})
& \le |\cQ|\lceil(1+\eta)n\rceil\Pr\brac{\Bin\brac{O_\e(\log^2n) ,\frac{1}{\lceil(1+\eta)n\rceil}}\geq 3} + 
\notag\\
&
+ |\cQ|\lceil(1+\eta)n\rceil^2\Pr\brac{\Bin\brac{O_\e(\log^2n) ,\frac{1}{\lceil(1+\eta)n\rceil}}\geq 2}^2
\notag\\
&=O_\e\bfrac{\log^7n}{n}.
\label{eq:Q}
\end{align}
{\em Explanation:}  The first term is an upper bound on the expected number of triples of edges of the same color and the second term accounts for double pairs of edges with the same color. \xt{We also use~\eqref{eq:silly} for each term with $k=3$ and $k=2$, respectively.}

Clearly,~\eqref{eq:Q} implies~(b). It also implies~(c) since any two cells within $\ell_p$-distance $Ar$ must be contained in one $Q\in\cQ$ \xt{(assuming $\e<1/2$)}.

(d)--(e)
Assuming {\bf P2} and {\bf P3}, there are at most $2n^{1-\e/2}$ cells that are not good.
\xt{Also, by a trivial volume argument similar to the one leading to~\eqref{eq:degree}, there are $O_\e(1)$ cells within $\ell_p$-distance $Ar$ of any given cell. Hence,}
there are at most $O_\e(n^{1-\e/2})$ cells within $\ell_p$-distance $Ar$ of some cell that is not good. Moreover, there are $O_\e(1/r^{d-1})$ cells within $\ell_p$-distance $Ar$ of the boundary of $[0,1]^d$.
Arguing as in~(a), we have that
\begin{align*}
\Pr(\neg\ (d) \text{ or } \neg (e) \mid \text{\bf P1,P2,P3})
&= O_\e \left(n^{1-\e/2} + 1/r^{d-1}\right) (1+\eta)n \Pr\brac{\Bin\brac{\log^2n, \frac{1}{(1+\eta)n}}\geq 2}
\\
&= O_\e \left( \frac{\log^4n}{n^{\e/2}} + \frac{\log^4n}{n r^{d-1}} \right)  = o(1).
\qedhere
\end{align*}
\end{proof}

We remove all the non-rainbow cells (which must be good a.a.s.~\xt{by Lemma~\ref{lemcols}(e)}) from the giant component $\Gamma_0$ of good cells, and obtain $\Gamma_1$. We argue next that a.a.s.~$\Gamma_1$ remains connected. \xt{In view of that, we will still refer to $\Gamma_1$ as the giant component.}

\begin{lemma}\label{Kg}
The graph of rainbow good cells $\Gamma_1$ is a.a.s.\ connected.
\end{lemma}
\begin{proof}
Let $C,C'$ be any two cells in $\Gamma_1$ (i.e.~rainbow and good). Since $\Gamma_0$ is connected, there must be a $C,C'$-path $P = (C=C_1,C_2,\ldots,C_m=C')$ of good cells. We want to show that, after deleting all the non-rainbow good cells, there is still a $C,C'$-path in $\Gamma_1$.
Suppose that an interior cell $C_i$ of path $P$ (i.e.~$1<i<m$) is non-rainbow. Let $S$ be the union of all cells different from $C_i$ that are within $\ell_p$-distance $2r$ of $C_i$. Assuming that the a.a.s.\ statements in Lemma~\ref{lemcols} hold, $S$ is away from the boundary of $[0,1]^d$ and all the cells contained in $S$ \xt{(including $C_{i-1}$ and $C_{i+1}$)} are rainbow and good.
Clearly, $S$ is topologically connected and hence the cells in $S$ induce a connected subgraph of $\Gamma_1$.
By construction, $C_{i-1},C_{i+1}\subseteq S$, and hence we can find a $C_{i-1},C_{i+1}$-path $Q$ that uses only cells in $S$ and thus cells that are rainbow and good. Hence, replacing the subpath $C_{i-1},C_i,C_{i+1}$ in $P$ by $Q$, we obtain \xt{a $C,C'$-walk that avoids $C_i$. This walk can be easily turned into a $C,C'$-path, by deleting some cells if needed. Note that by construction the new path avoids $C_i$, and moreover the only new cells that were added are good and rainbow.} Iterating this argument for all non-rainbow cells in $P$, we obtain a $C,C'$-path in $\Gamma_1$.
\end{proof}


We now choose a collection of paths $\cP$ in $G_{\cX,\hat r}$ satisfying {\bf Q1}--{\bf Q5}, which must exist a.a.s.\ in view of Lemma~\ref{lem1}. (If there are multiple choices for $\cP$, pick one arbitrarily.)
We call paths in $\cP$ {\em ugly}.
Let $V(\cP)$ be the set of points in $\cX$ covered by ugly paths, and let $E'(\cP)$ be the set of edges of $G_{\cX,\hat r}$ that are incident with some vertex in $V(\cP)$.
\begin{lemma}\label{rainP}
A.a.s.\ $E'(\cP)$ has $n^{O(\e^{1/d})}$ edges, and it is rainbow colored in $G_{\cX,\hat r,q}$.
\end{lemma}
\begin{proof}
Properties {\bf P1}, {\bf P3}, {\bf P4} and {\bf Q3} immediately imply that $|E'(\cP)| = n^{O(\e^{1/d})}$. Conditional upon this,
the probability that $E'(\cP)$ has a color repetition can be bounded by $\binom{n^{O(\e^{1/d})}}{2}\times n^{-1}=o(1)$. 
\end{proof}
%
In the sequel, we remove all vertices in $V(\cP)$ from the cells, without changing the original cell classification into good, bad and ugly.
(The argument will first attempt to build a rainbow cycle $H$ through $\cX\setminus V(\cP)$ and then insert the paths in $\cP$ into $H$.)
After this operation, {\bf Q1} and {\bf Q2} imply that ugly cells will no longer contain any points from $\cX$ (since they were all on ugly paths and got removed), while each good cell will contain at least $\e^3\log n-2$ points from $\cX$ (since at most $2$ points were removed).
Note that a.a.s.\  non-rainbow cells are not affected by this operation, since they do not contain points in $V(\cP)$ by Lemma~\ref{lemcols}(e) (with $A>2(20d)^d$) and {\bf Q3}.
For convenience, for each cell $C$, we redefine $V(C)$ and $E(C)$ to denote the sets of vertices and edges of $G_{\cX,\hat r} - V(\cP)$ contained in $C$. That is,
\begin{equation}
V(C) = C\cap\cX\setminus V(\cP)
\qquad\text{and}\qquad
E(C) = \binom{V(C)}{2}.
\label{eq:VC}
\end{equation}
Moreover, let $E'(C)$ be the set of all edges of $G_{\cX,\hat r} - V(\cP)$ incident with some point in $V(C)$.
Finally, we consider the set
$E'$ of all edges of $G_{\cX,\hat r}$ that are incident with points in $V(\cP)$ or with points in cells that are not good or not rainbow. That is,
\begin{equation}
E' = E'(\cP) \cup \bigcup_{C\notin \xt{V(\Gamma_1)} } E'(C).
\label{eq:Eprime}
\end{equation}
During the construction of the rainbow Hamilton cycle in Section~\ref{sec:coloring}, special care will be required to avoid repeating colors that already appear in $E'$. The following result will help us achieve that.
%
%
\begin{lemma}\label{k0}
Let $k_0 = \lceil 20/\e\rceil$.
A.a.s.\ $|E'| \le n^{1-\e/3}$, and moreover,
for every bad or non-rainbow cell $C$, fewer than $k_0 + 2$ edges in $E'(C)$ are assigned a color in $G_{\cX,\hat r,q}$ that is repeated on another edge in $E'$.
\end{lemma}
In fact, we prove something slightly stronger by allowing $C$ to range over all cells, not necessarily bad or non-rainbow. The reason for stating the lemma only for bad or non-rainbow cells is that when we use it in Section~\ref{sec:coloring} we will only expose the colors of edges in $E'$ and assume that the second a.a.s.\ conclusion of the lemma holds as stated just for these cells.
\begin{proof}
Properties {\bf P1}--{\bf P4} and the a.a.s.\ claims in Lemma~\ref{lemcols}(a) and Lemma~\ref{rainP} imply that (eventually, for large $n$)
\[
|E'| \le n^{O(\e^{1/d})} +  O(\log^6n) +  O(n^{1-\e/2} \log^2n) \le n^{1-\e/3}.
\]
Also, for every cell $C$, $|E'(C)| = O(\log^2n)$.
Conditional on all the above properties, the probability that there is a cell $C$ with $k_0$ edges in $E'(C)$ whose colors in $G_{\cX,\hat r,q}$ are also used on $E' \setminus E'(C)$ can be bounded by
\[
N\binom{O(\log^2n)}{k_0}\bfrac{n^{1-\e/3}}{(1+\eta)n}^{k_0} \leq n^{1+o(1)-k_0\e/10} =o(1).
\]
As a result, a.a.s.\ every cell $C$ has fewer than $k_0$ edges in $E'(C)$ with colors repeated on $E' \setminus E'(C)$.
To finish the proof, we observe that, in view of Lemma~\ref{lemcols}(b) (with say $A=1$), a.a.s.\ for every cell $C$ the set of edges $E'(C)$ contains at most one pair of edges with repeated colors.
\end{proof}
Let $G_{m,p}$ denote the Erd\H{o}s-R\'enyi-Gilbert binomial random graph on $m$ vertices where each pair of vertices is joined by an edge with probability $p\in[0,1]$. (Here $p$ is unrelated to the parameter associated to the $\ell_p$-norm $\|\cdot\|$ in the definition of the random geometric graph $G_{\cX,\hat r}$.)
We now prove a lemma concerning the existence of Hamilton cycles and spanning collections of paths in $G_{m,p}$.
\begin{lemma}\label{ham}
\xt{Let $p=p(m)\in[0,1]$ with $p=\Omega(1)$ as $m\to\infty$.}
Then, 
\begin{enumerate}[(a)]
\item $\Pr(G_{m,p}\text{ is not Hamiltonian})\leq e^{-mp/5}$ for $m$ sufficiently large.
\item Let $\psi(G)$ denote the minimum number of vertex disjoint paths that cover the vertices of $G$. Then, for fixed $k\ge 1$ and sufficiently large $m$,
\[
\Pr(\psi(G_{m,p}) > k)\leq e^{-kmp/6}.
\]
\end{enumerate}
\end{lemma}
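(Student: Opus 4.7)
The plan is to combine two-round exposure (sprinkling) with P\'osa's rotation--extension technique. Since $p$ is a fixed constant, the claimed bounds are quite loose, so only crude estimates are needed.

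Decompose $G_{m,p}=G_1\cup G_2$ into two independent graphs $G_1\sim G_{m,p_1}$ and $G_2\sim G_{m,p_2}$ with $(1-p_1)(1-p_2)=1-p$, choosing $p_1$ just below $p$ so that $p_2$ remains a positive constant. The role of $G_1$ is to guarantee a strong expansion property; $G_2$ is then the sprinkling needed to close a Hamilton cycle or merge paths.

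For (a), a standard Chernoff-and-union-bound calculation shows that $G_1$ satisfies P\'osa's expansion property
\[
|N_{G_1}(S)\setminus S|\geq 2|S|\quad\text{for every }S\subseteq[m]\text{ with }|S|\leq m/4,
\]
with failure probability dominated by the singleton term $\binom{m}{1}\binom{m}{2}(1-p_1)^{m-3}\leq m^{3}e^{-p_1(m-3)}$, which is at most $e^{-mp/4}$ for $m$ large by choosing $p_1$ sufficiently close to $p$. Conditioning on this event, the quantitative P\'osa rotation--extension lemma produces $\Omega(m^2)$ ``booster'' non-edges of $G_1$ whose addition closes a Hamilton cycle, and the probability that $G_2$ hits none of them is $(1-p_2)^{\Omega(m^2)}=e^{-\Omega(m^2)}$, which is absorbed into the $e^{-mp/4}$ term.

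For (b), apply the same sprinkling but with a stronger P\'osa-type expansion condition on $G_1$, of the form $|N_{G_1}(S)\setminus S|\geq 2|S|+O(k)$ for $|S|\leq m/3$; the union-bound failure probability at the smallest relevant set size is of order $m^{O(k)}e^{-p_1 m}$ and hence at most $e^{-kmp/4}$ for $m$ large. The standard generalization of P\'osa's lemma then guarantees that $G_1$ admits a path cover of at most $k-1$ paths, and indeed the rotation argument produces $\Omega(m^2)$ candidate ``merging'' non-edges whose addition allows two paths of the cover to be combined into one. Once $G_2$ is sprinkled on top we obtain $\psi(G_1\cup G_2)\leq k-1$ except on an event of probability at most $(1-p_2)^{\Omega(m^2)}=e^{-\Omega(m^2)}$, which is again negligible compared with $e^{-kmp/4}$.

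The main technical obstacle is the quantitative generalization of P\'osa's rotation--extension lemma to minimum path covers: specifically, verifying that the $O(k)$-strengthening of expansion produces $\psi\leq k-1$ together with the required $\Omega(m^2)$ merging candidates with the correct constants. This is a well-known extension of the classical Hamiltonicity argument, essentially by iterating rotations inside each path in parallel with cross-path swaps; once it is in place, both bounds follow immediately from the sprinkling decomposition and a small amount of bookkeeping to tune $p_1$ so that the $1/4$ appears in the exponent.
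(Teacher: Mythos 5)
Your argument for part (a) follows the paper's route essentially verbatim (two-round exposure, a P\'osa expansion property for the first round, booster non-edges closed by the second round), and the singleton-term calculation is sound.

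For part (b) there is a genuine gap. You strengthen the expansion demand to $|N_{G_1}(S)\setminus S|\geq 2|S|+O(k)$ and claim the resulting union-bound failure probability, of order $m^{O(k)}e^{-p_1 m}$, is at most $e^{-kmp/4}$. But that inequality forces $p_1\geq kp/4-O(k\log m/m)$, which is impossible once $k>4$ since you chose $p_1<p$. Strengthening the expansion required of singletons only inflates the polynomial prefactor $m^{O(k)}$; the exponent in the $s=1$ term of the union bound remains $\Theta(pm)$ no matter how much expansion you demand, so the bound cannot be pushed below $e^{-(1+o(1))pm}$. To place a factor $k$ in the exponent you must arrange for the smallest ``dangerous'' set to have size $\Theta(k)$, and your strengthened hypothesis does not accomplish that. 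The paper does this by a different device: it first shows $\Pr(|V_{3k}|\geq k)\leq e^{-(1-o(1))kpm}$, where $V_{3k}$ is the set of vertices of degree at most $3k$, and then deterministically augments $G_{m,p}$ with edges at the (at most $k$) low-degree vertices to obtain $H\supseteq G_{m,p}$ with minimum degree $3k$. In $H$, every set $S$ with $|S|<k$ automatically has $|N_H(S)|>2|S|$ from the minimum-degree condition alone, so the expansion union bound legitimately starts at $s=k$, and the dominant term $\binom{m}{k}\binom{m}{2k}(1-p)^{k(m-3k)}$ delivers $e^{-\Theta(kpm)}$. Finally, a Hamilton cycle in $H$, after deleting the $O(k)$ augmented edges (each incident to $V_{3k}$), decomposes into $O(k)$ paths of $G_{m,p}$, giving the path-cover bound. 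You would need to incorporate this low-degree-vertex counting trick, or some equivalent mechanism that raises the effective minimum set size to $\Theta(k)$, in order for your approach to part (b) to go through.
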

\begin{proof} (The asymptotic notation in this proof is with respect to $m\to\infty$, and we tacitly assume that $m$ is sufficiently large for every inequality to be true.)

(a) We consider the standard coupling $G_{m,p}\supseteq G_1\cup G_2$, where $G_1,G_2$ are independent copies of $G_{m,p/2}$.
Given a graph $G$ and $S\subseteq V(G)$, let $N_G(S)$ denote the disjoint neighborhood of $S$, i.e.~the set of vertices that are not in $S$ but are adjacent to some vertex in $S$. Let $A_1$ be the event that, for every $S \subseteq V(G_1)$ with $1\le|S|\le m/6$, $|N_{G_1}(S)| > 2|S|$. By bounding the expected number of sets $S$ that violate this condition, we get that
\[
\Pr(\neg A_1)\leq \sum_{s=1}^{\lfloor m/6\rfloor}\binom{m}{s}\binom{m}{2s}(1-p/2)^{s(m-3s)}
\leq  \sum_{s=1}^{\lfloor m/6\rfloor}\brac{\frac{me}{s}\cdot\frac{m^2e^2}{4s^2}\cdot e^{-mp/4}}^s\leq \frac{1}{3} e^{-mp/5}.
\]
Let $A_2$ be the event that $G_1$ is connected. By bounding the expected number of components of order at most $m/2$, we show that
\[
\Pr(\neg A_2)\le \sum_{s=1}^{\lfloor m/2\rfloor} \binom{m}{s} (1-p/2)^{s(m-s)}
\le \sum_{s=1}^{\lfloor m/2\rfloor} \left( \frac{me}{s} \cdot e^{-mp/4} \right)^s \le \frac{1}{3} e^{-mp/5}.
\]
Now let $A_3$ be the event that $G_2$ has at least $\mu := \left\lceil \binom{m}{2}p/2 - m^{7/4} \right\rceil = (1+o(1))m^2p/4$ edges.
By Chernoff's bound (see~e.g.~Corollary~21.7 in~\cite{FK}), $\Pr(\neg A_3) = e^{-\Omega(m^{3/2})}$.

\xt{We will apply P\'osa's rotation-extension argument (see P\'osa~\cite{posa} and also Section~6.2 in~\cite{FK} for more details).
Following the notation in~\cite{FK}, events $A_1$ and $A_2$ imply that, if $G_1$ is not Hamiltonian, then there exists a set $\END \subseteq V(G_1)$ and for each $x\in \END$ a set $\END_x$ with $|\END|,|\END_x| \ge m/6$ with the following property. The addition of any edge $\set{x,y}$ with $x\in\END$ and $y\in\END_x$ (which we call a {\em booster} edge) to $G_1$ results in either increasing the length of the longest path or closing a Hamilton cycle.}
Hence, there must be at least $\binom{\lceil m/6\rceil}{2}$ such boosters.
Moreover, since $A_1,A_2$ are increasing properties with respect to the addition of edges, every non-Hamiltonian supergraph $G_1'\supseteq G_1$ on vertex set $V(G_1)$ must satisfy the same property.
Let us condition on events $A_1$, $A_2$ and $A_3$, and consider an enumeration $e_1,e_2,\ldots,e_\mu,\ldots$ of the edges of $G_2$.
Suppose that, for some $0\le k\le \mu-1$, the supergraph $G_1+\set{e_1,e_2,\ldots,e_k}$ of $G_1$ is not Hamiltonian.
Then the probability that $e_{k+1}$ is a booster is at least $\binom{\lceil m/6\rceil}{2}/\binom{m}{2}\geq 1/37$. (This is because we know that none of $e_1,e_2,\ldots,e_k$ are boosters of $G_1+\set{e_1,e_2,\ldots,e_k}$.) Thus the probability that we fail to produce a Hamilton cycle after adding edges $e_1,\ldots,e_\m$ to $G_1$ is at most $\Pr(\Bin(\m,1/37)\leq m)\leq e^{-\Omega\xt{(m^2)}}$ (again by Chernoff's bound).
Hence, we conclude that
\[
\Pr(G_{m,p}\text{ is not Hamiltonian}) \le \Pr(\neg A_1) + \Pr(\neg A_2) + \Pr(\neg A_3) + e^{-\Omega\xt{(m^2)}} \le e^{-mp/5}.
\]

(b) Let $V_\ell$ be the set of vertices of degree at most $\ell$ in $G_{1}$. Then for $\ell,r=O(1)$,
\mults{
\Pr(|V_\ell|\geq r)\leq \binom{m}{r}\Pr(\Bin(m-r,p/2)\leq \ell)^r\leq m^r\brac{\sum_{i=0}^\ell \binom{m-r}{i}(p/2)^i(1-p/2)^{m-r-i}}^r\\
\leq m^{r+\ell r}e^{-r(m-r-\ell)p/2}\leq \frac{1}{2} e^{-rmp/3}.
}
Suppose now that we arbitrarily add edges incident to the vertices of degree at most $3k$ in $G_1$ so that the new graph $H$ has minimum degree $3k$. (We can follow any fixed deterministic rule to do that, so $H$ is a well-defined function of $G_1$.)
Let $A'_1$ be the event that, for every $S \subseteq V(H)$ with $1\le|S|\le m/6$, $|N_{H}(S)| > 2|S|$, and let $A'_2$ be the event that $H$ is connected. Then, arguing as in~(a),
\begin{align*}
\Pr(\neg A'_1) &\leq \sum_{s=k}^{\lfloor m/6\rfloor}\binom{m}{s}\binom{m}{2s}(1-p/2)^{s(m-3s)}
\leq  \sum_{s=k}^{\lfloor m/6\rfloor}\brac{\frac{me}{s}\cdot\frac{m^2e^2}{4s^2}\cdot e^{-mp/4}}^s\leq \frac{1}{2}e^{-kmp/5}.
\\
\Pr(\neg A'_2) &\le \sum_{s=3k}^{\lfloor m/2\rfloor} \binom{m}{s} (1-p/2)^{s(m-s)}
\le \sum_{s=3k}^{\lfloor m/2\rfloor} \left( \frac{me}{s} \cdot e^{-mp/4} \right)^s \le e^{-3kmp/5}.
\end{align*}
Repeating the same P\'osa rotation-extension argument from part~(a), it then follows that
\[
\Pr(H\cup G_2\text{ is not Hamiltonian})\le \Pr(\neg A'_1) + \Pr(\neg A'_2) + \Pr(\neg A_3) + e^{-\Omega\xt{(m^2)}} \le e^{-kmp/5}.
\]
Now suppose that $k\ge 2$. If $|V_{3k}|\leq \lfloor k/2\rfloor$ and $H\cup G_2$ is Hamiltonian, then we have $\psi(G_{m,p})\le k$, since deleting all the edges in $E(H)\setminus E(G_1)$ from a Hamilton cycle of $H\cup G_2$ creates at most $2\lfloor k/2\rfloor$ paths.
Hence,
\[
\Pr(\psi(G_{m,p})> k)\le e^{-kmp/5} + \Pr(|V_{3k}|\ge \lfloor k/2\rfloor+1) \le e^{-kmp/5} + \frac{1}{2} e^{-kmp/6} \le e^{-kmp/6}.
\]
The case $k=1$ follows immediately from part~(a). This finishes the proof of the lemma.
\end{proof}


\section{Rainbow Hamilton cycle construction}\label{sec:coloring}
We now describe how we select our rainbow Hamilton cycle. 
Firstly, for each point $X_i\in\cX$, we expose the cell containing $X_i$ (which determines which cells are good, bad and ugly), and suppose that properties {\bf P1}--{\bf P3} in Lemma~\ref{lem0} hold. (Note that we do not reveal the exact location of each point $X_i$ in $[0,1]^d$ to avoid conditioning on events of measure $0$.) Next, we expose the incidence structure of graph $G_{\cX,\hat r}$, and suppose {\bf P4} in Lemma~\ref{lem0} also holds. Moreover, assume there is a collection of ugly paths $\cP$ that satisfies {\bf Q1}--{\bf Q5} in Lemma~\ref{lem1}. In the case there is more than one choice for $\cP$, pick one arbitrarily.
Recall $V(\cP)$ is the set of points in $\cX$ covered by paths in $\cP$.
For the next part of the argument we will remove all points in $V(\cP)$ from the cells, and treat them separately (see~\eqref{eq:VC} and the discussion above it, in Section~\ref{sec:struc}). In view of this, for any good cell $C$, $|V(C)|\ge\e^3\log n-2$, while for every ugly cell $D$, $|V(D)|=0$.
Now we reveal the number of color repetitions in each cell (which determines which ones are rainbow) without exposing the actual colors of the edges yet. Assume that all the a.a.s.\ statements in Lemmas~\ref{lemcols} and~\ref{Kg} hold. In particular, every non-rainbow cell must be good, and the graph of rainbow good cells $\Gamma_1$ is connected.
Moreover, points in non-rainbow cells are not adjacent in $G_{\cX,\hat r}$ to points on ugly paths.
Further, we expose the colors of all the edges in $E'$ (defined in~\eqref{eq:Eprime}). Recall that these are the edges of $G_{\cX,\hat r}$ that are incident with points covered by $\cP$ or with points contained in cells that are not in $\Gamma_1$.
Recall the definitions of $E'(\cP)$ and $E'(C)$ in Section~\ref{sec:struc}, as well.
We condition on $E'(\cP)$ being rainbow (which is a.a.s.\ true by Lemma~\ref{rainP}), and suppose that the a.a.s.\ conclusions in Lemma~\ref{k0} hold. In particular $|E'| \le n^{1-\e/3}$.
We conclude this discussion with a crucial observation. Conditional on all the information about $G_{\cX,\hat r, q}$ exposed so far, the colors on the edges $X_iX_j$ with both endpoints in cells of $\Gamma_1$ remain uniformly random with the only restriction that, for every cell $C$ in $\Gamma_1$, the colors on the edges in $E(C)$ must be all different.

In the remainder of this section, we will a.a.s.\ build a rainbow cycle $H$ that visits all the vertices inside rainbow good cells and avoids colors assigned to edges in $E'$. Next, we will deterministically extend $H$ to include all the vertices inside bad or non-rainbow cells. Finally, we will insert the ugly paths into $H$ to create a rainbow Hamilton cycle.


\subsection{Rainbow good cells}\label{ssec:good}
Our first goal is to build a rainbow cycle that covers all the vertices inside rainbow good cells and avoids all the colors used on $E'$.
(Recall $|E'| \le n^{1-\e/3}$.)
Pick a spanning tree $T$ of the giant component $\Gamma_1$ consisting of all the rainbow good cells, and root it at one of its cells $C_1$.
Note that $T$ has maximum degree $\Delta(T)=O_\e(1)$ (\xt{by~\eqref{eq:degree} and since $T$} is a subgraph of $\cG_\cC$), and it contains $N_1$ cells with $N_1\sim N = O_\e(n/\log n)$, in view of all our earlier a.a.s.\ assumptions.
Suppose that $C_1,C_2,\ldots,C_{N_1}$ is an enumeration of the cells in $\Gamma_1$ that follows from a depth-first search of $T$ from the root cell $C_1$.
For each $1<i\le N_1$, let $\pi(i)$ denote the index of the parent $C_{\p(i)}$ of $C_i$ in this search.
For convenience, we write $V_i=V(C_i)$ and $E_i=E(C_i)$.
Let $m_i=|V_i|$, and recall $\e^3\log n-2 \le m_i\le \log n$ from our previous a.a.s.\ assumptions.
Also, for $i,j=1,\ldots,N_1$ ($i\ne j$), let $E_{i,j}$ denote the set of edges in $G_{\cX,\hat r}$ with one endpoint in $V_i$ and one in $V_j$.

Below we describe procedure {\tt Build}, in which we examine the rainbow good cells $C_1,\ldots,C_{N_1}$ in this order and, at each step $i=1,\ldots,N_1$, attempt to construct a rainbow cycle $H_{i} \subseteq G_{\cX,\hat r, q}$ through $V_1\cup\cdots\cup V_i$ that avoids colors on $E'$.
Roughly speaking, at each step $i$, we find either a rainbow cycle or a rainbow collection of paths with vertex set $V_i$ and which does not repeat any colors used on $E'$ or $H_{i-1}$. Then, we patch this cycle or each of these paths into $H_{i-1}$ at the parent cell $C_{\pi(i)}$ by using two edges in $E_{i,\pi(i)}$. This creates the new cycle $H_i$, which is typically rainbow. Occasionally, though, this patching operation cannot be done without repeating some colors already used on $H_{i-1}$. In that case, our algorithm attempts to fix these errors by making a small number of additional modifications to $H_i$, recursively.
In the description of procedure {\tt Build}, it is often convenient to regard $G_{\cX,\hat r}$ as an oriented graph by initially assigning to each edge $\{x,y\}$ an arbitrary orientation, $xy$ or $yx$, which may change over the course of the algorithm.
A path or a cycle is called {\em directed} (with respect to an orientation) if all its vertices have in- and out-degree at most one. We do not assume paths or cycles to be directed unless explicitly stated.

As we run this procedure we will expose some additional information of $G_{\cX,\hat r, q}$, and assume in our description that certain properties hold (see Assumptions~\ref{ass1}--\ref{ass6} below). If any of these assumptions ceases to be true at any given time, then {\tt Build} fails and immediately stops. (We will later show that a.a.s.\ this does not occur.)
Moreover, we claim that some additional properties are satisfied (see Claims~\ref{claim1}--\ref{claim5} below) at the end of each step $i=1,\ldots,N_1$ provided that procedure {\tt Build} has been successful so far. These claims are deterministic consequences of all of our assumptions, and will be proven inductively along with the description of the procedure.

Fix $1\le i\le N_1$, and suppose we have just completed $i$ steps of the algorithm.

\begin{claim}\label{claim1}
$H_i \subseteq G_{\cX,\hat r, q}$ is a rainbow directed cycle on vertex set $V_1\cup\cdots\cup V_i$, and it does not use any colors assigned to $E'$.
\end{claim}

\begin{claim}\label{claim2}
For every $j>i$, the procedure has not yet exposed the colors on any edges in $E_j \cup E_{j,\pi(j)}$. (In particular, these colors remain uniformly distributed conditional upon $E_j$ being rainbow.)
\end{claim}

For convenience, we identify the cycle $H_i$ with its edge set $E(H_i)$, so in particular $|H_i|$ denotes the number of (oriented, colored) edges in $H_i$. We will tacitly follow a similar abuse of notation for other subgraphs of $G_{\cX,\hat r}$ (and also for their corresponding edge-colored versions, given $G_{\cX,\hat r, q}$).
\begin{claim}\label{claim3}
For every $1< j \le i$, $|H_i \cap E_{j,\pi(j)}| = \xt{O_\e(1)}$. Moreover, for every $1\le j'<j$ with $j'\ne\pi(j)$, $|H_i \cap E_{j,j'}|=0$.
\end{claim}

Each of the cells $C_1,\ldots,C_i$ is labelled as {\em safe} or {\em unsafe} (with cell $C_1$ always declared unsafe). Safe cells will be used to fix errors due to color repetitions. Note that some cells may change their status from safe to unsafe during the procedure, but never the other way around.
\begin{claim}\label{claim4}
The number of unsafe cells is at most $o(N_1)$.
\end{claim}

For technical reasons, for each $1\le j\le i$ we select a `reasonably large' matching $M_j$ in $H_i \cap E_j$, and partition it into two disjoint matchings $M'_j$ and $M''_j$ of roughly equal size.
We say that an edge $e$ is incident with a set of edges $A$ in a graph if $e$ shares an endpoint with some edge in $A$.
\begin{claim}\label{claim5}
For every $1\le j \le i$, the following holds. $M_j,M'_j,M''_j \subseteq H_i \cap E_j$ are matchings with $M_j=M'_j\cup M''_j$ and $M'_j\cap M''_j = \emptyset$. These matchings satisfy $|M'_j|, |M''_j| \ge (\e^3/4+o(1)) \log n$ and thus $|M_j| \ge (\e^3/2+o(1)) \log n$. Moreover, if cell $C_j$ is safe, then the procedure has not yet exposed the colors on any edges in $E_{j,\pi(j)}$ that are incident with $M''_{\pi(j)}$.
\end{claim}

\paragraph{Procedure {\tt Build}:}
We initially assign an arbitrary orientation to every edge in $G_{\cX,\hat r}$. First consider cell $C_1$.
We examine the edges in $E_1$ one by one, reveal their color in $G_{\cX,\hat r, q}$, and delete those edges whose color has already been used on $E'$.
(Recall that the colors on $E_1$ are uniformly distributed conditional upon $E_1$ being rainbow.)
Let $G_1$ denote the graph with vertex set $V_1$ and the edges that remain.
Each edge is deleted with probability at most $|E'|/\left(|Q|-\xt{\binom{m_1}{2}}\right) = o(1)$, and thus (ignoring the orientations of the edges) $G_1$ contains a copy of $G_{m_1,p_1}$ with $p_1=1-o(1)$.
\begin{ass}\label{ass1}
$G_1$ is Hamiltonian.
\end{ass}
This holds a.a.s.~by Lemma~\ref{ham}(a). (Recall that if any of our Assumption~\ref{ass1}--\ref{ass6} fails, then {\tt Build} stops and fails.)
Then, pick a Hamilton cycle $H_1$ of $G_1$, which must be rainbow by construction, and modify the orientations of the edges of $H_1$ (if needed) to ensure it is a directed cycle.
Next, select an arbitrary matching $M_1$ of size at least $(\e^3/2+o(1))\log n$ contained in the cycle $H_1$ (e.g.~by taking alternating edges in $H_1$), and partition $M_1$ into two disjoint matchings $M'_1$ and $M''_1$ of size at least $(\e^3/4+o(1))\log n$ each. We label cell $C_1$ as unsafe since we will require all safe cells to have a parent in $T$.
This finalizes the first step of the procedure. Note that Claims~\ref{claim1}--\ref{claim5} are trivially satisfied with $i=1$.

Let $1<i\le N_1$, and suppose we have successfully run the first $i-1$ steps of {\tt Build}. In particular, we inductively assume that Claims~\ref{claim1}--\ref{claim5} were valid at the end of step $i-1$. We now proceed to describe step $i$. As in the first step, we reveal the colors of the edges in $E_i$ one by one, and delete those edges whose color has already been used on $E'$ or $H_{i-1}$. \xt{(As before, recall that the colors on $E_i$ are uniformly distributed conditional upon $E_i$ being rainbow.)} Let $G_i$ denote the resulting graph on vertex set $V_i$.
Each edge is deleted with probability at most
\[
\frac{|E'|+|H_{i-1}|}{|Q|-\xt{\binom{m_i}{2}}} \le \frac{n^{1-\e/3}+n}{(1+\eta)n - \xt{\log^2 n}} = \frac{1+o(1)}{1+\eta} \le 1 -\eta/2 + o(1),
\]
for $\eta<1$. Hence, $G_i$ contains a copy of $G_{m_i,p_i}$ with $p_i=\eta/2+o(1)$. (Here we are again ignoring the current orientations of the edges.)
We say that step $i$ is a Hamiltonian step if $G_i$ contains a Hamilton cycle (i.e.~a cycle through $V_i$, not necessarily directed).
By Lemma~\ref{ham}(a), step $i$ fails to be Hamiltonian with probability at most $e^{-m_ip_i/\xt{5}} \le n^{-\e^3\eta/\xt{10}+o(1)}$.
(This bound is also valid if $i=1$, although a stronger bound was used in the first step of the algorithm.)
\begin{ass}\label{ass2}
The number of non-Hamiltonian steps up to step~$i$ is at most $n^{1-\e^3\eta/\xt{11}}$.
\end{ass}
Note that the expected number of non-Hamiltonian steps at the end of the procedure is at most $N_1 n^{-\e^3\eta/\xt{10}+o(1)} = o(n^{1-\e^3\eta/\xt{11}})$, so Assumption~\ref{ass2} is a.a.s.~valid by the Markov inequality.

If step $i$ is Hamiltonian, we will perform a {\tt \xt{C}ycle-patch} step (below).
\begin{ass}\label{ass3}
If step~$i$ is not Hamiltonian then $G_i$ contains a collection of at most $\psi_0=\lceil\frac{13}{\e^{3}\eta}\rceil$ vertex-disjoint paths that cover $V_i$.
\end{ass}
	Note that, by Lemma~\ref{ham}(b), the probability that Assumption~\ref{ass3} fails at step~$i$ is at most $e^{-\psi_0 m_i p_i/6} \le n^{-\psi_0 \e^3\eta/12+o(1)} = o(1/N_1)$. Taking a union bound over all $N_1$ steps in the algorithm, we conclude that a.a.s.\ Assumption~\ref{ass3} is always valid. In this case we will perform a {\tt \xt{F}orest-patch} step (below). 

\paragraph{Swaps and cycle rotations:}
For the description of the {\tt \xt{C}ycle-patch} and {\tt \xt{F}orest-patch} steps below, it is convenient to introduce the following operations in the context of a directed \xt{graph} where loops are allowed. Given two non-incident directed edges $xy$ and $uv$ (possibly $x=y$ or $u=v$), an $xy,uv$-{\em swap} is the operation that deletes $xy$ and $uv$ and replaces them by $xv$ and $uy$.
Note that the orientation of the edges $xy$ and $uv$ determines the way in which their endpoints get recombined into new edges by the $xy,uv$-swap.
We can use swaps to merge or modify directed cycles. For instance, given two vertex-disjoint directed cycles $O_1,O_2$ with $xy\in O_1$ and $uv\in O_2$, the application of an $xy,uv$-swap to $O_1\cup O_2$ yields one single directed cycle on the same vertex set.
(Note that $O_1$ or $O_2$ could be directed cycles of length $1$, i.e.~loops, or of length $2$, i.e.~pairs of anti-parallel edges.)
Moreover, given a directed cycle $O$ of length at least $4$ and two non-consecutive edges $xy,uv$ in $O$, we can reverse the orientations of all edges along the directed path from $y$ to $v$ in $O$ (so that in particular $uv$ becomes $vu$), then apply an $xy,vu$-swap, and finally reverse the orientation of $vy$ to $yv$. We call this operation an $xy,uv$-{\em rotation} of the directed cycle $O$. The resulting graph is a different directed cycle with the same vertex set as $O$.

\paragraph{{\tt Cycle-patch} step:}

If $G_i$ is Hamiltonian, then label cell $C_i$ as safe and pick a Hamilton cycle $D_i$ of $G_i$. We can assume that $D_i$ is a directed cycle, by appropriately modifying the orientations of the edges if necessary.
Note that $H_{i-1} \cup D_i$ is rainbow by construction and does not use any colors from edges in $E'$.
Since $D_i$ is a cycle with $m_i\ge(\e^3+o(1))\log n$ edges, we can choose a matching $M_i$ of size at least $(\e^3/2+o(1))\log n$ contained in $D_i$. Moreover, recall that $M'_{\pi(i)} \subseteq E_{\pi(i)} \cap H_{i-1}$ is a matching of size at least
$(\e^3/4+o(1))\log n$, by Claim~\ref{claim5} applied to step $i-1$.
Now let us reveal the colors on all the edges in $E_{i,\pi(i)}$ that are incident with both $M_i$ and $M'_{\pi(i)}$. These colors had not been exposed yet in view of Claim~\ref{claim2} at step $i-1$.
Our goal is to merge $H_{i-1}$ and $D_i$ together into one single larger directed cycle, which we will call $H_i$.
To do that, we will pick appropriate edges $xy\in M_i$ and $uv\in M'_{\pi(i)}$, and perform an $xy,uv$-swap to $H_{i-1}\cup D_i$.
That is, edges $xy$ and $uv$ are replaced by $xu$ and $yv$, by appropriately updating edge orientations in $G_{\cX,\hat r}$ if needed. (We could also merge $H_{i-1}$ and $D_i$ in a different way if we first reversed the orientation of the edges in $D_i$ and then applied an $xy,vu$-swap instead, but our argument will ignore this alternative.)
An $xy$,$uv$-swap is {\em valid} if the two added edges, $xu$ and $yv$, receive different colors in $G_{\cX,\hat r, q}$ and these colors have not already been used on $E' \cup H_{i-1} \cup E_i$. Note that, in that case, the cycle $H_i$ resulting from the swap satisfies the properties in Claim~\ref{claim1}.
\begin{ass}\label{ass4}
There are indeed edges $xy\in M_i$ and $uv\in M'_{\pi(i)}$ such that the $xy$,$uv$-swap is valid.
\end{ass}
The probability that a given $xy$,$uv$-swap is valid is at least
\[
\left( 1 - \frac{|E'|+|H_{i-1}|+|E_i|+1}{|Q|} \right)^2 \ge \left( 1 - \frac{1+o(1)}{1+\eta} \right)^2 \ge \eta^2/2,
\]
for $\eta<\sqrt2-1$ and large enough $n$.
Since $M_i$ and $M'_{\pi(i)}$ are disjoint matchings, the pairs of edges added in different swaps are disjoint, and thus the events concerning the validity of different swaps are independent.
Hence, the probability that Assumption~\ref{ass4} fails at step $i$ is at most
\[
(1-\eta^2/2)^{ |M_i| |M'_{\pi(i)}| } \le (1-\eta^2/2)^{ (\e^6/8+o(1))\log^2n} = o(1/N_1).
\]
Summing over all $N_1$ potential steps, we conclude that a.a.s.\ Assumption~\ref{ass4} holds throughout the procedure.
In view of that, we pick a valid $xy,uv$-swap arbitrarily, apply it to cycles $H_{i-1}$ and $D_i$, and call $H_i$ the resulting cycle.
After the swap, we update the matchings as follows.  We delete edge $uv$ from $M'_{\pi(i)}$ and also from $M_{\pi(i)}$. Moreover, we delete $xy$ from $M_i$, and partition the resulting matching $M_i$ into two disjoint matchings $M'_i$ and $M''_i$ of size at least $(\e^3/4+o(1))\log n$ each.
This finalizes step $i$. We now verify that Claims~\ref{claim1}--\ref{claim5} remain valid at the end of this step.
By construction, $H_i$ satisfies all the properties in Claim~\ref{claim1}.
Claim~\ref{claim2} is also true since we did not expose the colors on any edge incident with any vertex in $V_j$ for $j>i$.
Moreover, since the only edges in $H_i\setminus H_{i-1}$ with endpoints in different cells are $xu$ and $yv$, then
\[
|H_i\cap E_{j,j'}| = \begin{cases}
|H_{i-1}\cap E_{j,j'}| & \text{for $1\le j'<j\le i-1$}
\\
2 & \text{for $j=i$ and $j'=\pi(i)$}
\\
0 & \text{for $j=i$ and $j'\ne\pi(i)$,}
\end{cases}
\]
which implies that Claim~\ref{claim3} remains valid.
Claim~\ref{claim4} still holds since we did not label any new cell unsafe.
Matchings $M_i,M'_i,M''_i$ introduced at this step satisfy the properties in Claim~\ref{claim5} by construction. Note that we did not expose the colors of any edges in $E_{i,\pi(i)}$ incident with $M''_{\pi(i)}$.
Matchings $M_j,M'_j,M''_j$ with $j<i$ satisfied Claim~\ref{claim5} at the previous step, but we must take into account that the sizes of matchings $M_{\pi(i)},M'_{\pi(i)}$ were decreased by one. However, each matching can only be affected at most $\Delta(T)=O_\e(1)$ times througout the procedure as a result of a {\tt \xt{C}ycle-patch} step, so Claim~\ref{claim5} holds.

\paragraph{{\tt Forest-patch} step:}
Otherwise, suppose $G_i$ is not Hamiltonian. In that case, we label cell $C_i$ as unsafe.
A {\em linear forest} is a graph whose connected components are paths.
By Assumption~\ref{ass3}, we can pick a spanning linear forest $L_i$ of $G_i$ with at most $\psi_0$ components.
Note that $H_{i-1} \cup L_i$ is rainbow by construction and does not use any of the colors used on $E'$.
Since $L_i$ consists of at most $\psi_0$ paths with a total of at least $m_i-\psi_0 \ge(\e^3+o(1))\log n$ edges, we can choose a matching $M_i$ of size at least $(\e^3/2+o(1))\log n$ contained in $L_i$, and then partition $M_i$ into two disjoint matchings $M'_i,M''_i$ of size at least $(\e^3/4+o(1))\log n$ in any arbitrary way.
Moreover, recall that $M''_{\pi(i)} \subseteq E_{\pi(i)} \cap H_{i-1}$ is a matching of size at least
$(\e^3/4+o(1))\log n$, by Claim~\ref{claim5} applied to step $i-1$.

Our goal is to patch each of the path components of $L_i$ into $H_{i-1}$.
For each path component $P$ of $L_i$, we reveal the colors on all the edges in $E_{i,\pi(i)}$ that are incident with both an endpoint of $P$ and some edge in $M''_{\pi(i)}$. These colors had not been exposed yet in view of Claim~\ref{claim2} at step $i-1$.
Then, we apply a {\tt \xt{P}ath-patch} sub-step (below) to this path $P$. \xt{If successful, this sub-step} extends cycle $H_{i-1}$ to a larger rainbow cycle that contains $P$ and does not use any colors previously used on $E'$. For convenience, we still call this new cycle $H_{i-1}$, but will rename it to $H_i$ at the end of the step when all the paths of $L_i$ have been inserted.

\paragraph{{\tt Path-patch} sub-step:}
Let $u,v$ be the endpoints of path $P$ in cell $C_i$ (possibly $u=v$).
We can assume that path $P$ is directed, say from $v$ to $u$, by appropriately modifying the orientation of the edges if necessary.
Our goal is to patch $P$ into $H_{i-1}$. To do that, we will pick an appropriate edge $xy\in M''_{\pi(i)}$, delete $xy$ from $H_{i-1}$, and add edges $xv,uy$ to join directed paths $H_{i-1}-xy$ and $P$.
(As usual, we update the orientations in $G_{\cX,\hat r}$ of the two added edges $xv,uy$, if needed.)
By analogy with the {\tt \xt{C}ycle-patch} step, we can regard this operation as performing an $xy,uv$-swap to $H_{i-1} \cup (P+uv)$,
where $P+uv$ denotes the directed cycle obtained by adding edge $uv$ to path $P$.
(If $u=v$, then $P+uv$ is simply a loop $uu$; if $P$ consists of one single edge $vu$, then we simply regard $P+uv$ as a pair of anti-parallel edges.)
Note that the way $P$ is inserted into $H_{i-1}$ depends on the orientation given to $P$. (For simplicity, our procedure only considers one of the two possible ways of doing that.)

Given an edge $xy\in M''_{\pi(i)}$, let $c_1$ and $c_2$ denote the colors assigned in $G_{\cX,\hat r, q}$ to the edges that would be added at the end of an $xy,uv$-swap (i.e.~$xv$ and $uy$).
If color $c_k$ is repeated on an edge $e_k\in H_{i-1}$ for some $k\in\{1,2\}$, then we say that the $xy,uv$-swap causes a {\em problem} at edge $e_k$ or simply that $e_k$ is a {\em problem edge} (relative to that particular swap and $H_{i-1}$).
Note that each color $c_k$ appears on at most one edge of $H_{i-1}$ (since $H_{i-1}$ is rainbow), and therefore an $xy,uv$-swap causes at most two problems.
We say that the $xy,uv$-swap is {\em ideal} if colors $c_1$ and $c_2$ are different from each other and do not appear on any edges in $E' \cup H_{i-1} \cup E_i$.
(In particular, ideal swaps create no problem edges.)
On the other hand, the $xy,uv$-swap is {\em acceptable} if it is not ideal but the following conditions hold:
1) colors $c_1$ and $c_2$ are different;
2) $c_1$ and $c_2$ do not appear on any edge in $E'\cup E_i$;
3) each $c_k$ is used at most once on $H_{i-1}$ (note that this condition is redundant since $H_{i-1}$ is rainbow, but it will be useful later on when we consider acceptable swaps in a slightly different context that allows a few color repetitions);
4) if color $c_k$ is used on some edge $e_k \in H_{i-1}$ for some $k\in\{1,2\}$ (i.e.~$e_k$ is a problem edge), then $e_k \in E_{j_k}$ for some safe cell $C_{j_k}$; and
5) if both colors $c_1,c_2$ are respectively used on edges $e_1,e_2\in H_{i-1}$, then the safe cells $C_{j_1}$ and $C_{j_2}$ containing these edges (as defined in condition 4)) must be different.
Later on, we will consider acceptable swaps in a context where $H_{i-1}$ may already contain some additional edges labelled as problems, which are located at different safe cells. In view of that, it is convenient to reword condition 5) as follows:
a problem edge created by the $xy,uv$-swap cannot be contained in the same cell as another problem edge (relative to that swap or already present in $H_{i-1}$).
Finally, the $xy,uv$-swap is {\em forbidden} if it is neither ideal nor acceptable.

\begin{ass}\label{ass5}
Not all the $xy,uv$-swaps for $xy\in M''_{\pi(i)}$ are forbidden.
(Hence there is at least one swap that is ideal or acceptable.)
\end{ass}
We defer the proof that Assumption~\ref{ass5} is valid a.a.s., until later.

First suppose that there exists an edge $xy\in M''_{\pi(i)}$ such that the $xy,uv$-swap is ideal. Pick one such edge $xy$ arbitrarily, and apply the $xy,uv$-swap to $H_{i-1}\cup(P+uv)$. This inserts $P$ into $H_{i-1}$. The resulting cycle, which we still call $H_{i-1}$, is directed and rainbow by construction and does not contain any colors used on $E'$. After performing the swap, edge $xy$ is removed from matching $M''_{\pi(i)}$ and thus from $M_{\pi(i)}$.
Otherwise, if there is no ideal swap available, pick an arbitrary $xy\in M''_{\pi(i)}$ such that the $xy,uv$-swap is acceptable (there must be at least one \xt{by Assumption~\ref{ass5}}).
By definition, for at least one $k\in\{1,2\}$ (and maybe for both), color $c_k$ already appears on one edge $e_k\in H_{i-1}$ which is labelled as a problem edge.
We then apply the $xy,uv$-swap to $H_{i-1}\cup(P+uv)$, and remove edge $xy$ from the matchings $M''_{\pi(i)}$ and $M_{\pi(i)}$.
In this case, the new cycle obtained after the swap, which we still denote by $H_{i-1}$, is directed and contains no colors used on $E'$, but it is not rainbow since it has one or two color repetitions, one per problem edge.
We now attempt to make $H_{i-1}$ rainbow by taking a {\tt Problem-fix} sub-step (below) for each problem edge $e_k$. This procedure recursively applies cycle rotations to $H_{i-1}$, \xt{each one of which removes one problem edge but may in turn create at most two new problem edges.}
\begin{ass}\label{ass6}
{\tt Problem-fix} successfully terminates after at most $\xi= \xt{ \lceil 17 / (\eta^2 \e^3) \rceil }$ recursive iterations.
\end{ass}
We defer the proof that Assumption~\ref{ass6} is valid a.a.s., until later.
\xt{Note that in view of this assumption, $H_{i-1}$ never contains more than $\xi$ problem edges, since each problem edge triggers an iteration of {\tt Problem-fix}.}
We will show that, \xt{after all the recursive iterations of {\tt Problem-fix} in the 
{\tt Path-patch} sub-step successfully terminate,} the resulting cycle $H_{i-1}$ is directed and rainbow, includes path $P$ and does not share any colors with $E'$.
This ends the {\tt Path-patch} sub-step. If $P$ was the last path of $L_i$ to be inserted, then rename $H_{i-1}$ to $H_i$.

\paragraph{{\tt Problem-fix} sub-step:}
Recall that $H_{i-1}$ is a directed cycle, but not rainbow. However, all its color repetitions are due to the presence of problem edges.
\xt{Also recall that $H_{i-1}$ may contain up to $\xi$ problem edges in view of Assumption~\ref{ass6}.}
Moreover, since problem edges originate from acceptable swaps, they must all lie in different and safe cells by construction.

Suppose that we are trying to fix a problem edge $uv\in H_{i-1}\cap E_j$ for some $j\le i-1$. In particular, the cell $C_j$ containing that edge must be safe, and thus by Claim~\ref{claim5} the colors on the edges that are incident with $uv$ and $M''_{\pi(j)}$ have not yet been exposed.
Our plan is to perform an $xy,uv$-rotation of $H_{i-1}$ for some suitable $xy\in M''_{\pi(j)}$.
(Note that $uv$ and $xy$ are not incident, since they are contained in different cells.)
This operation amounts to reversing the orientation of some edges (including $uv$ to $vu$) and applying an $xy,vu$-swap. For simplicity, we will only discuss the choice of the $xy,vu$-swap for $xy\in M''_{\pi(j)}$, and assume that the edge orientations are adjusted as required by the $xy,uv$-rotation, so the resulting cycle (which we still call $H_{i-1}$) is directed.

We essentially follow the same strategy as in the {\tt Path-patch} sub-step.
We reiterate Assumption~\ref{ass5} here, and suppose that not all the $xy,vu$-swaps are forbidden. (Otherwise, {\tt Build} fails.)
Then, we first attempt to perform an ideal $xy,vu$-swap for some $xy\in M''_{\pi(j)}$, if possible, and otherwise use an acceptable one.
In the former case, the corresponding $xy,uv$-rotation successfully removes the problem edge $uv$ from $H_{i-1}$ while not creating any new problems. In the latter case, we also get rid of $uv$, but add one or two new problem edges (and thus color repetitions) to $H_{i-1}$.
At the end of either case, we further delete edge $xy$ from the matchings $M''_{\pi(j)},M_{\pi(j)}$, and remove edge $uv$ from any of the matchings $M_{j},M'_{j},M''_{j}$ that may contain it (possibly none of them).
Finally, we update the status of cell $C_j$ from safe to unsafe to guarantee that it will never host other problem edges at any other step of the algorithm.
If the resulting directed cycle $H_{i-1}$ has no problem edges left, then we successfully terminate {\tt Problem-fix}. Otherwise, we recursively apply {\tt Problem-fix} to one of the remaining problem edges.
With Assumption~\ref{ass6} in mind, we only allow up to $\xi$ recursive iterations arising from one {\tt Path-patch} sub-step. Otherwise, {\tt Build} fails.

This ends the description of the {\tt Forest-patch} step (and all its corresponding sub-steps), which is taken at step $i$ if $G_i$ is not Hamiltonian. We proceed to verify that at the end of that step Claims~\ref{claim1}--\ref{claim5} remain valid.
Claim~\ref{claim1} holds by construction, inductively assuming that it was true at step~$i-1$. Indeed, after inserting all paths from $L_i$ into $H_{i-1}$ and recursively fixing all the problem edges, $H_i$ is a directed rainbow cycle spanning $V_1\cup\cdots\cup V_i$ and avoiding all colors that appear on $E'$.
Claim~\ref{claim2} also remains valid since only colors on edges in $E_i\cup \bigcup_{1<j\le i} E_{j,\pi(j)}$ were exposed at step $i$.
To verify Claim~\ref{claim3}, note that $|H_i\cap E_{j,j'}| = |H_{i-1}\cap E_{j,j'}|$ for each $1\le j'<j\le i-1$, unless $j'=\pi(j)$ and a problem edge was created in $C_j$ during step $i$, in which case $|H_i\cap E_{j,\pi(j)}| = |H_{i-1}\cap E_{j,\pi(j)}| + 2$.
Moreover, $|H_i\cap E_{i,\pi(i)}| \le 2\psi_0$ and $|H_i\cap E_{i,j}| = 0$ for $j\ne\pi(i)$.
Hence, Claim~\ref{claim3} follows by induction and from the fact that a cell can host at most one problem edge during the whole procedure.
%
%
Now recall that $C_1$ is always unsafe and that a cell $C_j$ ($1<j\le i$) is unsafe at the end of step $i$ only in the following two situations: a) step $j$ was not Hamiltonian (there are at most $n^{1-\e^3\eta/\xt{11}}$ such steps, by Assumption~\ref{ass2}); or b) step $j$ was Hamiltonian (and thus $C_j$ was initially declared safe), but then $C_j$ became unsafe due to a problem edge arising from a later non-Hamiltonian step (there \xt{are} at most $n^{1-\e^3\eta/\xt{11}}$ non-Hamiltonian steps, by Assumption~\ref{ass2}, and each triggers at most $\xi$ problem edges, by Assumption~\ref{ass6}).
Hence, there are at most $1 + n^{1-\e^3\eta/\xt{11}} + \xi n^{1-\e^3\eta/\xt{11}} = o(N_1)$ unsafe cells, and Claim~\ref{claim4} holds.
Next we verify Claim~\ref{claim5} at the end of step~$i$, inductively assuming that it was true at the previous step.
Note that matchings $M_i,M'_i,M''_i$ created in the {\tt Forest-patch} step satisfy all the requirements by construction (recall that $C_i$ is declared unsafe, so the last condition in the claim is trivially true).
We need to check that $M_j,M'_j,M''_j$ (for $1\le j\le i-1$) still meet all the conditions at the end of step $i$.
During that step, the sizes of matchings $M_{\pi(i)}$ and $M''_{\pi(i)}$ decreased by at most $\psi_0$ due to the insertion of the paths of $L_i$ into $H_{i-1}$, but this can happen at most $\Delta(T)=O_\e(1)$ times throughout the entire procedure.
Moreover, for each cell $C_j$ ($1<j\le i-1$) containing a problem edge at step $i$, we decreased the sizes of $M_j,M'_j, M''_j$ by at most one (but this can happen only once in the procedure), and likewise the sizes of $M_{\pi(j)},M''_{\pi(j)}$ were decreased by one (but this can happen at most $\Delta(T)=O_\e(1)$ times). We excluded $j=1$ above since $C_1$ is unsafe, and thus never contains problem edges. Hence, by induction, $|M'_j|, |M''_j| \ge (\e^3/4+o(1)) \log n$ and $|M_j| \ge (\e^3/2+o(1)) \log n$ for all $1\le j\le i$.
Finally, all the cells $C_j$ ($1<j\le i-1$) for which we exposed the colors on the edges in $E_{j,\pi(j)}$ incident with $M''_{\pi(j)}$ at step $i$ were relabelled unsafe, and therefore the last condition in Claim~\ref{claim5} remains valid.

We have shown that Claims~\ref{claim1}--\ref{claim5} hold throughout the $N_1$ steps of procedure {\tt Build} as long as it does not fail: that is, under Assumptions~\ref{ass1}--\ref{ass6}. In particular, at the end of the $N_1$-th step, by Claim~\ref{claim1}, we obtain a rainbow cycle $H_{N_1}$ spanning all the vertices in rainbow good cells and avoiding all the colors used on $E'$.
It only remains to show that Assumptions~\ref{ass5} and~\ref{ass6} hold a.a.s.\ through all the steps (since Assumptions~\ref{ass1}--\ref{ass4} have already been verified) in order to conclude that procedure {\tt Build} succeeds a.a.s.
To do so, we will bound the probability that a given swap in a {\tt Path-\xt{patch}} or a {\tt Problem-fix} sub-step is forbidden and the probability it is not ideal.

\xt{Let $1<i\le N_1$, and suppose that step $i$ is non-Hamiltonian (i.e.~the procedure performs a {\tt Forest-patch} step).}
Let $u,v\in V_j$ be the endpoints of the problem edge to be fixed in a {\tt Problem-fix} \xt{sub-step within that step} with $j\le i-1$ or the endpoints of the path to be patched in a {\tt Path-patch} sub-step with $j=i$.
Given $xy\in M''_{\pi(j)}$, the $xy,uv$-swap is forbidden if, for some $k\in\{1,2\}$, color $c_k$ is equal to $c_{3-k}$ or one of the repeated colors on $H_{\xt{i-1}}$ (there are at most $\xi$ of those, by Assumption~\ref{ass6}, since each color repetition is due to a problem edge) or if $c_k$ appears on any of the following edges:
\xt{1)}
edges in $E'\cup E_i$ (where $|E'\cup E_i| \le n^{1-\e/3} + \xt{\binom{\log n}{2}}$, by an earlier bound on $|E'|$ and {\bf P1}),
\xt{2)}
edges in \xt{$H_{i-1}$ contained in} unsafe cells (there are $o(N_1 \log n)$ of those \xt{edges}, by Claim~\ref{claim4} and \xt{the fact that each unsafe cell contains at most $\log n$ edges of $H_{i-1}$ --- by~{\bf P1} and since $H_{i-1}$ is a cycle}),
\xt{3)}
edges in $H_{\xt{i-1}}$ with endpoints in different cells (there are $\xt{O_\e}(N_1)$ of those, by Claim~\ref{claim3})
or \xt{4)}
edges in \xt{$H_{\xt{i-1}}$ and inside} a safe cell containing another problem edge (there are at most $\xi \log n$ of these, by Assumption~\ref{ass6}, \xt{{\bf P1} and the fact that $H_{\xt{i-1}}$ is a cycle}).
Hence, the probability that a given $xy,uv$-swap is forbidden is at most
\[
2\left( \frac{1 + \xi + |E'| + \xt{\binom{\log n}{2}} + o(N_1\log n) + \xt{O_\e}(N_1) + \xt{O_\e}(\log n)}{|Q|} \right) = o(1).
\]
Since $M''_{\pi(j)}$ is a matching, events concerning different swaps are independent, and thus the probability that all the swaps are forbidden is
\[
(o(1))^{|M''_{\pi(j)}|} \le  (o(1))^{(\e^3/4+o(1))\log n} = n^{-\omega(1)} = o(1/N_1),
\]
by Claim~\ref{claim5}.
Therefore, summing this bound over all $N_1$ potential steps times the at most $(1+\xi)$ possible {\tt Path-patch} or {\tt Problem-fix} sub-steps within each step, the probability that Assumption~\ref{ass5} fails at some point in the algorithm is $o(1)$.
%
\xt{On the other hand, given $xy\in M''_{\pi(j)}$, recall that the $xy,uv$-swap is ideal if colors $c_1$ and $c_2$ are different from each other and do not appear on any edges in $E'\cup H_{i-1}\cup E_i$.
Hence, the probability that a given $xy,uv$-swap is not ideal is at most
\[
1 - \left( 1 - \frac{1 + |E'| + n + \binom{m_i}{2}}{|Q|} \right)^2 = 1 - \left(\frac{\eta+o(1)}{1+\eta}\right)^2 \le 1 - \eta^2/2
\]
(for $0<\eta<\sqrt2 - 1$ and large enough $n$).}
Thus, the probability that we are forced to pick an acceptable swap at a given {\tt Path-patch} or {\tt Problem-fix} sub-step is at most
\[
(1-\xt{\eta^2}/2)^{|M''_{\pi(j)}|} \le (1-\xt{\eta^2}/2)^{(\e^3/4+o(1))\log n} \le n^{-\xt{\eta^2}\e^3/8+o(1)},
\]
again by Claim~\ref{claim5}.
Note that each acceptable swap introduces one or two new problem edges, which in turn require recursive iterations of {\tt Problem-fix}.
Then, the probability that from one single {\tt Forest-patch} step we create $\xi$ problems (which requires picking at least $\xt{ \lceil\xi/2\rceil }$ acceptable swaps) is at most
\[
O( n^{- \xi \xt{\eta^2} \e^3/16+o(1)} ) = o(1/N_1),
\]
where we use the fact that $\xi \xt{\eta^2} \e^3/16 > 1$.
So we expect $o(1)$ violations of Assumption~\ref{ass6} in the $N_1$ steps of the algorithm, and thus Assumption~\ref{ass6} holds a.a.s.\  by Markov inequality.
This completes the analysis of {\tt Build}, and shows that a.a.s.\ we obtain a rainbow directed cycle $H=H_{N_1}$ through all the vertices inside rainbow good cells that avoids all the colors used on $E'$.


\subsection{Bad or non-rainbow cells}
Suppose that all the earlier a.a.s.\ statements in the paper hold (see the discussion at the beginning of Section~\ref{sec:coloring}) and also that procedure {\tt Build} succeeds at building the rainbow directed cycle $H$. (Here we assume that the edges of $G_{\cX,\hat r}$ are oriented as in Section~\ref{ssec:good}.)
Recall that $H$ does not use any colors on $E'$, which is the set of edges of $G_{\cX,\hat r}$ that are incident with a point in a cell that is not good or not rainbow or are incident with a point in an ugly path.
We will extend $H$ by adding the points in cells that are either non-rainbow
(and thus good by Lemma~\ref{lemcols}\xt{(e)})
or bad, one cell at a time. We will do that deterministically, given all our a.a.s.\ assumptions.

Let $C$ be a cell that is bad or non-rainbow. By the definition of bad cell and by Lemma~\ref{lemcols}\xt{(e)}, $C$ must be adjacent in the graph of cells $\cG_\cC$ to some rainbow good cell $C_i$ (for some $1\le i\le N_1$). In view of Claim~\ref{claim5}, at the end of procedure {\tt Build}, $H\cap E_i$ contains a matching $M_i$ of size $|M_i|=\Omega_\e(\log n)$.
An edge in $E'(C)$ (i.e.~incident with some point in $V(C)$) is labelled {\em dangerous} if its color in $G_{\cX,\hat r,q}$ is repeated on some other edge in $E'$. By Lemma~\ref{k0} there can be at most $k_0+1=O_\e(1)$ dangerous edges in $E'(C)$.
Suppose first that $V(C)$ contains more than $2k_0+5$ points.
Since $V(C)$ induces a clique in $G_{\cX,\hat r}$, we can find $k_0+2$ edge-disjoint spanning cycles of that clique (for instance, consider the well-known Walecki construction described in~\cite{Alspach}). At least one of these cycles does not contain any dangerous edges. Pick one and call it $H_C$. We can assume that $H_C$ is a directed cycle by adjusting the orientations of its edges as needed.
Now pick an edge $uv \in H_C$ and an edge $xy \in M_i$ with the property that $xy$ is not incident with any dangerous edge in $E(C')$. (We have at least $|M_i|-k_0-1 = \Omega_\e(\log n)$ choices for $xy$, since a dangerous edge in $E(C')$ is incident with at most one edge in $M_i$.) Then, by applying an $xy,uv$-swap to $H_C \cup H$, we merge $H_C$ and $H$ into one larger rainbow directed cycle that we still call $H$. After the swap, delete $xy$ from $M_i$.
Otherwise, if $V(C)$ contains $t \le 2k_0+5$ points $v_1,\ldots,v_t$, then pick $t$ different edges $x_1y_1,\ldots,x_ty_t$ in $M_i$ such that each $x_jy_j$ is not incident with any dangerous edge in $E'(C)$. As before, we have plenty of freedom to do this, since we can choose from a pool of at least $|M_i|-k_0-1 = \Omega_\e(\log n)$ edges. Then, each vertex $v_j$ ($1\le j\le t$) is inserted into $H$ by replacing $x_jy_j$ by the directed path $x_jv_jy_j$, ajusting edge orientations if needed.
The resulting cycle, which we still denote by $H$, is directed and rainbow by construction and includes all the points in $V(C)$. After doing that, we delete edges $x_1y_1,\ldots,x_ty_t$ from the matching $M_i$.

We repeat the same operation for every bad or non-rainbow cell $C$, one cell at a time, until $H$ covers all vertices of $\cX$ that are not in ugly paths.
Note that, since the graph of cells has maximum degree $\Delta(\cG_\cC) = O_\e(1)$ \xt{(from~\eqref{eq:degree})}, for each good rainbow cell $C_i$ ($1\le i\le N_1$), the corresponding matching $M_i$ may loose at most $(2k_0+5) \Delta(\cG_\cC) = O_\e(1)$ edges in total, so we still have $|M_i| = \xt{\Omega_\e}(\log n)$ throughout this procedure, as required.
Moreover, the edges that were used to extend $H$ must have all different colors and do not repeat colors from edges incident with ugly paths, thanks to the fact that we did not choose any dangerous edges.
Therefore, we eventually obtain a rainbow directed cycle $H$ that covers all vertices in $\cX\setminus V(\cP)$ and avoids all colors on $E'(\cP)$.


\subsection{Ugly paths}
It only remains to patch the ugly paths into $H$. Let $\cP$ be the collection of ugly paths as in Lemma \ref{lem1}.
For each path $P\in\cP$, let $u$ and $v$ be the endpoints of $P$, and assume that $P$ is directed from $v$ to $u$ by adjusting the orientations of its edges appropriately if necessary.
Let $C_P$ be a good cell as in~{\bf Q4}, which must be also rainbow by Lemma~\ref{lemcols}\xt{(e) and~{\bf Q3}}. So, using the notation from Section~\ref{ssec:good}, $C_P=C_i$ for some $1\le i\le N_1$. Pick any edge $xy$ from the matching $M_i$. (There are $|M_i|=\Omega_\e(\log n)$ choices.) Note that both $x$ and $y$ are adjacent with $u$ and $v$ in $G_{\cX,\hat r}$ by our choice of $C_P$. By applying an $xy,uv$-swap to $H\cup (P+uv)$, we insert $P$ into $H$. The resulting cycle, which we still call $H$, is directed and rainbow, since the set of edges $E'(\cP)$ incident with ugly paths is rainbow by Lemma~\ref{rainP} and $H$ does not use any colors appearing on $E'(\cP)$.
We can repeat this operation for each $P\in\cP$, noting that each rainbow good cell $C_i$ will be used to patch at most one ugly path in view of~{\bf Q5}. Hence, we eventually obtain a rainbow (directed) Hamilton cycle $H$ of $G_{\cX,\hat r,q}$.
This completes the proof of Theorem~\ref{th2}.

\section*{Acknowledgements}
The authors wish to thank an anonymous referee for their careful review and helpful comments that significantly improved the quality of the manuscript.

\end{document}